\newtheorem{theorem}{Theorem}[section]
\newtheorem{lemma}[theorem]{Lemma}
\newtheorem{proposition}[theorem]{Proposition}
\newtheorem{corollary}[theorem]{Corollary}
\theoremstyle{definition}
\newtheorem{definition}[theorem]{Definition}
\newtheorem{example}[theorem]{Example}
\theoremstyle{remark}
\newtheorem{remark}[theorem]{Remark}
\numberwithin{equation}{section}
\DeclareMathOperator{\Ker}{Ker}
\DeclareMathOperator{\Ig}{Im}
\begin{document}

\allowdisplaybreaks

\title{Secondary Hochschild homology and differentials}

\author{Jacob Laubacher}
\address{Department of Mathematics, St. Norbert College, De Pere, WI 54115}
\email{jacob.laubacher@snc.edu}

\subjclass[2020]{Primary 13D03; Secondary 16E40, 13N05}

\date{\today}

\keywords{Hochschild homology, cyclic homology, differentials.\\\indent\emph{Corresponding author.} Jacob Laubacher \Letter~\href{mailto:jacob.laubacher@snc.edu}{jacob.laubacher@snc.edu} \phone~920-403-2961.}

\begin{abstract}
In this paper we study a generalization of K\"ahler differentials, which correspond to the secondary Hochschild homology associated to a triple $(A,B,\varepsilon)$. We establish computations in low dimension, while also showing how this connects with the kernel of a multiplication map.
\end{abstract}

\maketitle

\section{Introduction}

Gerhard Hochschild introduced the cohomology theory bearing his name in \cite{H} in order to study extensions of algebras over a field $\mathbbm{k}$. Later, Gerstenhaber used this theory to study deformations in \cite{G}. On the other hand, Hochschild homology led to the study of cyclic homology, as well as a stepping-stone towards de Rham cohomology.

Secondary Hochschild (co)homology, a generalization of Hochschild (co)homology, has now been studied for some time. The cohomology originally introduced in \cite{S} by Staic, and the homology in \cite{LSS}, the theory involves a triple $(A,B,\varepsilon)$ which consists of a commutative $\mathbbm{k}$-algebra $B$ inducing a $B$-algebra structure on an associative $\mathbbm{k}$-algebra $A$ by way of a morphism $\varepsilon:B\longrightarrow A$. The corresponding secondary cyclic (co)homology was also studied in \cite{LSS}, and properties, applications, and generalizations of the secondary Hochschild (co)homology were investigated in \cite{BBN}, \cite{CHL}, \cite{CSS}, \cite{DMN}, \cite{Laub}, \cite{L}, and \cite{SS}, among others.

In this paper, our goal is to study how the secondary Hochschild homology associated to the triple $(A,B,\varepsilon)$ interacts with a generalization of K\"ahler differentials. In particular, we prove Theorem \ref{newmain}, which is a generalized version of Theorem \ref{oldmain}. In Section \ref{sec2} we provide all necessary background information, keeping this paper as self-contained as possible. Section \ref{sec3} introduces our notion of secondary K\"ahler differentials, leading to an isomorphism between them and the secondary Hochschild homology under certain conditions, as well as providing computations in low dimension. Next, Section \ref{sec4} discusses the kernel of the multiplication map from $A^{\otimes2}\otimes B$ to $A$ and how this also corresponds to the aforementioned differentials. We round out the paper in Section \ref{sec5} by hypothesizing possible avenues for future work.

\section{Preliminaries}\label{sec2}

We start by fixing $\mathbbm{k}$ to be a field containing $\mathbb{Q}$. We set all tensor products over $\mathbbm{k}$, unless otherwise stated (so $\otimes=\otimes_\mathbbm{k})$. Next, let $A$ be an associative $\mathbbm{k}$-algebra with multiplicative unit. Most of our work is done over a triple, the formal definition of which is below:

\begin{definition}(\cite{S})
We call $(A,B,\varepsilon)$ a \textbf{triple} if $A$ is a $\mathbbm{k}$-algebra, $B$ is a commutative $\mathbbm{k}$-algebra, and $\varepsilon:B\longrightarrow A$ is a morphism of $\mathbbm{k}$-algebras such that $\varepsilon(B)\subseteq\mathcal{Z}(A)$. Call $(A,B,\varepsilon)$ a \textbf{commutative triple} if $A$ is also commutative.
\end{definition}

One can find examples of triples in \cite{S} and \cite{SS}, as well as more extensions in \cite{CL} and \cite{CLS}.

\subsection{The secondary Hochschild homology}\label{firstsub}

Next we recall the secondary Hochschild homology associated to the triple $(A,B,\varepsilon)$, which was introduced in \cite{LSS}.

Following notation in \cite{LSS}, we define $\overline{C}_n(A,B,\varepsilon)=A^{\otimes n+1}\otimes B^{\otimes\frac{n(n+1)}{2}}$ for all $n\geq0$. As is customary in these settings (see \cite{CS} for example), we can follow an organizational schematic to represent the elements of $A^{\otimes n+1}\otimes B^{\otimes\frac{n(n+1)}{2}}$ as
$$
\otimes
\begin{pmatrix}
a_0 & b_{0,1} & b_{0,2} & \cdots & b_{0,n-2} & b_{0,n-1} & b_{0,n}\\
1 & a_1 & b_{1,2} & \cdots & b_{1,n-2} & b_{1,n-1} & b_{1,n}\\
1 & 1 & a_2 & \cdots & b_{2,n-2} & b_{2,n-1} & b_{2,n}\\
\vdots & \vdots & \vdots & \ddots & \vdots & \vdots & \vdots\\
1 & 1 & 1 & \cdots & a_{n-2} & b_{n-2,n-1} & b_{n-2,n}\\
1 & 1 & 1 & \cdots & 1 & a_{n-1} & b_{n-1,n}\\
1 & 1 & 1 & \cdots & 1 & 1 & a_n\\
\end{pmatrix},
$$
where $a_i\in A$, $b_{i,j}\in B$, and $1\in \mathbbm{k}$.

Next, we have that $\partial_n:\overline{C}_n(A,B,\varepsilon)\longrightarrow \overline{C}_{n-1}(A,B,\varepsilon)$ is given by
\begin{align*}
\partial_n&\Big(\otimes
\begin{pmatrix}
a_0 & b_{0,1} & \cdots & b_{0,n-1} & b_{0,n}\\
1 & a_1 & \cdots & b_{1,n-1} & b_{1,n}\\
\vdots & \vdots & \ddots & \vdots & \vdots\\
1 & 1 & \cdots & a_{n-1} & b_{n-1,n}\\
1 & 1 & \cdots & 1 & a_n\\
\end{pmatrix}
\Big)\\
&=\sum_{i=0}^{n-1}(-1)^i\left(\otimes
\begin{pmatrix}
a_0 & b_{0,1} & \cdots & b_{0,i}b_{0,i+1} & \cdots & b_{0,n-1} & b_{0,n}\\
1 & a_1 & \cdots & b_{1,i}b_{1,i+1} & \cdots & b_{1,n-1} & b_{1,n}\\
\vdots & \vdots & \ddots & \vdots & \ddots & \vdots & \vdots\\
1 & 1 & \cdots & a_i\varepsilon(b_{i,i+1})a_{i+1} & \cdots & b_{i,n-1}b_{i+1,n-1} & b_{i,n}b_{i+1,n}\\
\vdots & \vdots & \ddots & \vdots & \ddots & \vdots & \vdots\\
1 & 1 & \cdots & 1 & \cdots & a_{n-1} & b_{n-1,n}\\
1 & 1 & \cdots & 1 & \cdots & 1 & a_n\\
\end{pmatrix}\right)\\
&~+(-1)^n\left(\otimes
\begin{pmatrix}
a_n\varepsilon(b_{0,n})a_0 & b_{1,n}b_{0,1} & \cdots & b_{n-2,n}b_{0,n-2} & b_{n-1,n}b_{0,n-1}\\
1 & a_1 & \cdots & b_{1,n-2} & b_{1,n-1}\\
\vdots & \vdots & \ddots & \vdots & \vdots\\
1 & 1 & \cdots & a_{n-2} & b_{n-2,n-1}\\
1 & 1 & \cdots & 1 & a_{n-1}\\
\end{pmatrix}\right).
\end{align*}

It was again shown in \cite{LSS} that $\partial_{n-1}\circ\partial_n=0$, and that the resulting complex is denoted by $\overline{\mathbf{C}}_\bullet(A,B,\varepsilon)$.

\begin{definition}(\cite{LSS})
The homology of the chain complex $\overline{\mathbf{C}}_\bullet(A,B,\varepsilon)$ is called the \textbf{secondary Hochschild homology associated to the triple $(A,B,\varepsilon)$}, and this is denoted by $HH_*(A,B,\varepsilon)$.
\end{definition}

\begin{remark}(\cite{LSS})
By taking $B=\mathbbm{k}$, notice that the secondary Hochschild homology associated to the triple $(A,B,\varepsilon)$ reduces to the usual Hochschild homology associated to $A$. In notation, one has that $HH_n(A,\mathbbm{k},\varepsilon)=HH_n(A)$ for all $n\geq0$.
\end{remark}

Since it will be used later in the paper, it may be helpful to view the complex $\overline{\mathbf{C}}_\bullet(A,B,\varepsilon)$ in low dimension. In particular, we have
$$
\ldots\xrightarrow{~\partial_3~}A^{\otimes3}\otimes B^{\otimes3}\xrightarrow{~\partial_2~}A^{\otimes2}\otimes B\xrightarrow{~\partial_1~}A\longrightarrow0
$$
with
$$
\partial_1\Big(\otimes\begin{pmatrix} a & \alpha\\ 1 & b\\ \end{pmatrix}\Big)=a\varepsilon(\alpha)b-b\varepsilon(\alpha)a,
$$
and
$$
\partial_2\Big(\otimes\begin{pmatrix} a & \alpha & \beta\\ 1 & b & \gamma\\ 1 & 1 & c\\ \end{pmatrix}\Big)=\otimes\begin{pmatrix} a\varepsilon(\alpha)b & \beta\gamma\\ 1 & c\\ \end{pmatrix}-\otimes\begin{pmatrix} a & \alpha\beta\\ 1 & b\varepsilon(\gamma)c\\ \end{pmatrix}+\otimes\begin{pmatrix} c\varepsilon(\beta)a & \alpha\gamma\\ 1 & b\\ \end{pmatrix}.
$$

\begin{example}\label{dimzerH}(\cite{LSS})
For any triple $(A,B,\varepsilon)$, we have $HH_0(A,B,\varepsilon)=HH_0(A)=\frac{A}{[A,A]}$.
\end{example}

\subsection{The secondary cyclic homology}

Connes introduced cyclic cohomology in \cite{C}, extensive details of which can be found in \cite{Lo}. Here we recall the analogous secondary homological version that was established in \cite{LSS}. We start by considering the permutation $\lambda=(0, 1, 2,\ldots, n)$ and the cyclic group $C_{n+1}=\langle\lambda\rangle$. Notice that $C_{n+1}$ has a natural action on $\overline{C}_n(A,B,\varepsilon)$ given by
$$
\lambda\Big(\otimes
\begin{pmatrix}
a_0 & b_{0,1} & \cdots & b_{0,n-1} & b_{0,n}\\
1 & a_1 & \cdots & b_{1,n-1} & b_{1,n}\\
\vdots & \vdots & \ddots & \vdots & \vdots\\
1 & 1 & \cdots & a_{n-1} & b_{n-1,n}\\
1 & 1 & \cdots & 1 & a_n\\
\end{pmatrix}\Big)
=(-1)^n\left(\otimes
\begin{pmatrix}
a_n & b_{0,n} & \cdots & b_{n-2,n} & b_{n-1,n}\\
1 & a_0 & \cdots & b_{0,n-2} & b_{0,n-1}\\
\vdots & \vdots & \ddots & \vdots & \vdots\\
1 & 1 & \cdots & a_{n-2} & b_{n-2,n-1}\\
1 & 1 & \cdots & 1 & a_{n-1}\\
\end{pmatrix}\right).
$$

Consider the new complex built by setting $\overline{C}_n^{\lambda}(A,B,\varepsilon)=\overline{C}_n(A,B,\varepsilon)/\Ig(1-\lambda)$, while continuing to use the maps $\partial_n$ from Section \ref{firstsub}. It was verified in \cite{LSS} that $\partial_n$ maintains the necessary structure.

\begin{definition}(\cite{LSS})
The homology of the chain complex $\overline{\mathbf{C}}_\bullet^\lambda(A,B,\varepsilon)$ is called the \textbf{secondary cyclic homology associated to the triple $(A,B,\varepsilon)$}, and this is denoted by $HC_*(A,B,\varepsilon)$.
\end{definition}

\begin{remark}(\cite{LSS})
As before, taking $B=\mathbbm{k}$, we have that the secondary cyclic homology associated to the triple $(A,B,\varepsilon)$ reduces to the usual cyclic homology associated to $A$. That is, $HC_n(A,\mathbbm{k},\varepsilon)=HC_n(A)$ for all $n\geq0$.
\end{remark}

\begin{example}\label{dimzer}(\cite{LSS})
For any triple $(A,B,\varepsilon)$, we have that $HC_0(A,B,\varepsilon)=HH_0(A,B,\varepsilon)$.
\end{example}

Connes famously established a long exact sequence bearing his name that connected Hochschild homology with its cyclic counterpart. In \cite{LSS} the analogue of Connes' long exact sequence for the secondary case was produced:

\begin{theorem}\label{SCLES}\emph{(\cite{LSS})}
Let $\mathbbm{k}$ be a field of characteristic zero. For a triple $(A,B,\varepsilon)$, we have the long exact sequence
$$\ldots\xrightarrow{B_*}HH_n(A,B,\varepsilon)\xrightarrow{I_*}HC_n(A,B,\varepsilon)\xrightarrow{S_*}HC_{n-2}(A,B,\varepsilon)\xrightarrow{B_*}HH_{n-1}(A,B,\varepsilon)\xrightarrow{I_*}\ldots$$
\end{theorem}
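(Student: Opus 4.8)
The plan is to recognize the secondary data as a \emph{cyclic $\mathbbm{k}$-module} and then reproduce Connes' classical derivation of the periodicity sequence (as in \cite{Lo}) operator by operator. First I would extract from $\partial_n$ the individual face maps $d_i$ (so that $\partial_n=\sum_{i=0}^n(-1)^i d_i=:b$, with the final $(-1)^n$ term of $\partial_n$ being the cyclically twisted $d_n$), and introduce the matching degeneracies $s_i$, which in the schematic of Section \ref{firstsub} insert a new row and column carrying a unit $a=1$ and entries $b_{i,j}=1$. Together with the cyclic operator $\lambda$ already defined, I would verify that $(d_i,s_i,\lambda)$ satisfy the cyclic-module axioms, the key ones being $\lambda^{n+1}=\id$ on $\overline{C}_n(A,B,\varepsilon)$ and the compatibility of $\lambda$ with the $d_i$; the $\varepsilon$-twists in $d_i$ (e.g.\ the $a_i\varepsilon(b_{i,i+1})a_{i+1}$ and $a_n\varepsilon(b_{0,n})a_0$ entries) are exactly where these checks require care.

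Next I would set up Connes' operator on $\overline{C}_\bullet(A,B,\varepsilon)$. Writing $b'$ for the non-cyclic part of $\partial$ (the sum $\sum_{i=0}^{n-1}(-1)^i d_i$) and $N=\sum_{i=0}^{n}\lambda^i$ for the norm operator, I would define $B=(1-\lambda)\,s\,N$ using the extra degeneracy $s$ and establish the standard identities $b^2=0$, $B^2=0$, and $bB+Bb=0$, so that $(\overline{C}_\bullet(A,B,\varepsilon),b,B)$ is a mixed complex. The identities that power this are $bN=Nb'$, $(1-\lambda)b'=b(1-\lambda)$, $N(1-\lambda)=(1-\lambda)N=0$, and the contraction $sb'+b's=\id$; all of these reduce to the cyclic-module axioms from the first step. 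From the mixed complex I would form the associated $(b,B)$-bicomplex $\mathcal{B}(A,B,\varepsilon)$ and argue that its total complex computes $HC_*(A,B,\varepsilon)$. It is precisely here that the characteristic-zero hypothesis enters: the odd columns are acyclic via the homotopy $s$, and comparing the spectral sequence of $\mathcal{B}(A,B,\varepsilon)$ with the quotient complex $\overline{\mathbf{C}}^\lambda_\bullet(A,B,\varepsilon)$ that \emph{defines} $HC_*$, the char-zero assumption kills the higher homology of $C_{n+1}=\langle\lambda\rangle$ and identifies $H_*\big(\Tot\mathcal{B}(A,B,\varepsilon)\big)\cong H_*(\overline{\mathbf{C}}^\lambda_\bullet(A,B,\varepsilon))=HC_*(A,B,\varepsilon)$.

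With the bicomplex in hand, the long exact sequence is formal. Filtering $\mathcal{B}(A,B,\varepsilon)$ by columns yields a short exact sequence of complexes in which the subcomplex is the Hochschild complex $\overline{\mathbf{C}}_\bullet(A,B,\varepsilon)$ (whose homology is $HH_*(A,B,\varepsilon)$ since its differential is $b=\partial$) and the quotient is the degree-two shift $\Tot\mathcal{B}(A,B,\varepsilon)[2]$ (whose homology is $HC_{*-2}(A,B,\varepsilon)$). The inclusion induces $I_*$, the projection induces $S_*$, and the snake lemma produces the connecting homomorphism, which a direct computation identifies with $B_*$. Applying the long exact homology sequence to this short exact sequence of complexes then gives exactly the stated sequence $\ldots\xrightarrow{B_*}HH_n(A,B,\varepsilon)\xrightarrow{I_*}HC_n(A,B,\varepsilon)\xrightarrow{S_*}HC_{n-2}(A,B,\varepsilon)\xrightarrow{B_*}HH_{n-1}(A,B,\varepsilon)\xrightarrow{I_*}\ldots$.

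The main obstacle I anticipate is not the homological algebra, which is purely formal once the mixed complex is in place, but the combinatorial bookkeeping of the first two steps: writing down the degeneracies $s_i$ and extra degeneracy $s$ on the matrix schematic and checking all cyclic-module relations, together with the char-zero comparison between the $(b,B)$-bicomplex model and the quotient-complex model of $HC_*(A,B,\varepsilon)$. The intricate $B$-entries $b_{i,j}$ and the $\varepsilon$-twisting in both the faces and in $\lambda$ mean that each identity, though routine in spirit, must be verified against the full secondary structure rather than borrowed verbatim from the classical case $B=\mathbbm{k}$.
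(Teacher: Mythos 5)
This theorem is not proved in the paper at all --- it is imported verbatim from \cite{LSS}, so there is no internal proof to compare against. Your proposal is a correct and complete outline of the standard argument (exhibit $\overline{C}_\bullet(A,B,\varepsilon)$ as a cyclic module, form the mixed complex and $(b,B)$-bicomplex, use characteristic zero to identify the total complex with the quotient-complex model of $HC_*$, and read off the sequence from the column filtration), and this is essentially the route taken in the cited source; note only that the paper's $\lambda$ already carries the sign $(-1)^n$, so the relation $\lambda^{n+1}=\id$ you need holds because $(-1)^{n(n+1)}=1$.
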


\subsection{The kernel of a multiplication map}\label{oldkernel}

For a commutative algebra $A$, set $I$ to be the kernel of the multiplication map from $A\otimes A$ to $A$. That is, consider $m:A\otimes A\longrightarrow A$ given by $m(a\otimes b)=ab$, and then set $I=\Ker(m)$. One can show that $I$ is generated by elements of the form $1\otimes a-a\otimes1$, where $a\in A$, and that the $A$-bimodule $I/I^2$ is $A$-symmetric. Next, recalling K\"ahler differentials $\Omega_{A|\mathbbm{k}}^1$, we have the following classic result which unites our concepts of Hochschild homology, K\"ahler differentials, and the set $I$ (see \cite{W}, for instance):

\begin{theorem}\label{oldmain}
For a commutative algebra $A$, we have that
$$
HH_1(A)\cong\Omega_{A|\mathbbm{k}}^1\cong I/I^2.
$$
\end{theorem}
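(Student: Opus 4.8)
The plan is to use $\Omega_{A|\mathbbm{k}}^1$ as a bridge and establish the two isomorphisms separately. First I would pin down $HH_1(A)$ explicitly. Taking $B=\mathbbm{k}$ in the low-dimensional complex of Section \ref{firstsub}, the map $\partial_1$ becomes $a_0\otimes a_1\mapsto a_0a_1-a_1a_0$, which vanishes since $A$ is commutative, while $\partial_2$ becomes $a_0\otimes a_1\otimes a_2\mapsto a_0a_1\otimes a_2-a_0\otimes a_1a_2+a_2a_0\otimes a_1$. Hence $HH_1(A)=A^{\otimes2}/\Ig(\partial_2)$, and this is the object I would compare against $\Omega_{A|\mathbbm{k}}^1$.

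For the isomorphism $HH_1(A)\cong\Omega_{A|\mathbbm{k}}^1$, I would define $\varphi:A^{\otimes2}\to\Omega_{A|\mathbbm{k}}^1$ by $a_0\otimes a_1\mapsto a_0\,da_1$ and check that it annihilates $\Ig(\partial_2)$: applying $\varphi$ to a generic boundary and expanding $d(a_1a_2)$ by the Leibniz rule collapses the three terms to zero, so $\varphi$ descends to $\overline{\varphi}:HH_1(A)\to\Omega_{A|\mathbbm{k}}^1$. For the inverse, I would observe that $a\mapsto\overline{1\otimes a}$ defines a $\mathbbm{k}$-derivation $A\to HH_1(A)$, where the Leibniz identity is precisely the statement that $\partial_2(1\otimes a_1\otimes a_2)=a_1\otimes a_2-1\otimes a_1a_2+a_2\otimes a_1$ lies in $\Ig(\partial_2)$; the universal property $\Hom_A(\Omega_{A|\mathbbm{k}}^1,M)\cong\Der_\mathbbm{k}(A,M)$ then yields an $A$-module map $\psi:\Omega_{A|\mathbbm{k}}^1\to HH_1(A)$ with $a_0\,da_1\mapsto\overline{a_0\otimes a_1}$. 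Checking that $\overline{\varphi}$ and $\psi$ are mutually inverse is immediate on generators.

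For the isomorphism $\Omega_{A|\mathbbm{k}}^1\cong I/I^2$, I would follow the classical argument. The map $\delta:A\to I/I^2$, $a\mapsto\overline{1\otimes a-a\otimes1}$, is $\mathbbm{k}$-linear, and expanding $(1\otimes a-a\otimes1)(1\otimes b-b\otimes1)\in I^2$ yields the congruence $1\otimes ab-ab\otimes1\equiv a(1\otimes b-b\otimes1)+b(1\otimes a-a\otimes1)\pmod{I^2}$, so $\delta$ is a derivation and the universal property supplies a map $\Omega_{A|\mathbbm{k}}^1\to I/I^2$. For the inverse I would restrict the $\mathbbm{k}$-algebra homomorphism $A^{\otimes2}\to A\oplus\Omega_{A|\mathbbm{k}}^1$, $a\otimes b\mapsto(ab,a\,db)$, into the trivial square-zero extension, to the ideal $I$; it sends $I$ into $\{0\}\oplus\Omega_{A|\mathbbm{k}}^1$, kills $I^2$, and sends the generator $1\otimes a-a\otimes1$ to $da$, thereby giving a well-defined map $I/I^2\to\Omega_{A|\mathbbm{k}}^1$. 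Since (as noted in Section \ref{oldkernel}) $I$ is generated by these elements, the two maps are mutually inverse on generators.

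The main obstacle is purely bookkeeping: ensuring well-definedness of every map whose target is $\Omega_{A|\mathbbm{k}}^1$ or whose construction invokes its universal property. The crux is that the single Leibniz relation defining $\Omega_{A|\mathbbm{k}}^1$ is encoded on one side by the degenerate boundaries $\partial_2(1\otimes a_1\otimes a_2)$ and on the other side by $I^2$; verifying these correspondences precisely is where the care lies, though each individual check is a short computation.
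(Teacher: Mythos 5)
Your proposal is correct. Note that the paper does not actually prove Theorem \ref{oldmain}; it records it as a classical fact with a pointer to \cite{W}, and the argument you give is precisely the standard one found there. The only substantive difference from the way the paper handles the analogous secondary statements (Propositions \ref{HH1AB} and \ref{JIso}) is methodological: the paper constructs both maps explicitly in each direction and verifies by hand that they kill the relevant relations (the image of $\partial_2$, respectively $J^2+\hat{J}$), whereas you construct the maps \emph{into} $HH_1(A)$ and $I/I^2$ via the universal property $\Hom_A(\Omega_{A|\mathbbm{k}}^1,M)\cong\Der_{\mathbbm{k}}(A,M)$ after checking that $a\mapsto\overline{1\otimes a}$ and $a\mapsto\overline{1\otimes a-a\otimes 1}$ are derivations. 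The universal-property route saves you from verifying well-definedness on the $\Omega$ side twice, at the cost of having to know that the module structures on $HH_1(A)$ and $I/I^2$ descend (which you implicitly use and is routine); the paper's hands-on style is what generalizes directly to the secondary setting, where the extra relation \eqref{newcon} has no obvious universal-property formulation. Both are complete and correct proofs of the classical statement.
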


\section{Secondary Differentials}\label{sec3}

The main objective of this paper is to get a result like Theorem \ref{oldmain} that corresponds to the secondary case. The goal of this section is to establish the former isomorphism.

\begin{definition}\label{SKdiff}
For a commutative triple $\mathcal{T}=(A,B,\varepsilon)$, denote $\Omega_{\mathcal{T}|\mathbbm{k}}^1$ to be the left $B\otimes A$-module of \textbf{secondary K\"ahler differentials} generated by the $\mathbbm{k}$-linear symbols $d(\alpha\otimes a)$ for $\alpha\in B$ and $a\in A$ with the module structure of $(\alpha\otimes a)\cdot d(\beta\otimes b)=a\varepsilon(\alpha)d(\beta\otimes b)$, along with the relations
\begin{enumerate}[(i)]
\item\label{og1} $d(\lambda(\alpha\otimes a)+\mu(\beta\otimes b))=\lambda d(\alpha\otimes a)+\mu d(\beta\otimes b)$,
\item\label{og2} $d((\alpha\otimes a)(\beta\otimes b))=a\varepsilon(\alpha)d(\beta\otimes b)+b\varepsilon(\beta)d(\alpha\otimes a)$, and
\item\label{newcon} $d(\alpha\otimes1)+d(\alpha\otimes1)=d(1\otimes\varepsilon(\alpha))$.
\end{enumerate}
for all $a,b\in A$, $\alpha,\beta\in B$, and $\lambda,\mu\in\mathbbm{k}$.
\end{definition}

We note that of course $(\alpha\otimes a)(\beta\otimes b)=\alpha\beta\otimes ab$. Furthermore, notice that as consequence of the above, one can write
\begin{align*}
d(\alpha\beta\otimes ab)&=a\varepsilon(\alpha)d(\beta\otimes b)+b\varepsilon(\beta)d(\alpha\otimes a),\\
d(1\otimes ab)&=ad(1\otimes b)+bd(1\otimes a),\\
d(\alpha\beta\otimes1)&=\varepsilon(\alpha)d(\beta\otimes1)+\varepsilon(\beta)d(\alpha\otimes1), \text{and}\\
d(\alpha\otimes a)&=\varepsilon(\alpha)d(1\otimes a)+ad(\alpha\otimes1).
\end{align*}

\begin{remark}\label{zerorem}
Observe that $d(1\otimes1)=0$, and therefore $d(\lambda\otimes\mu)=0$ for all $\lambda,\mu\in\mathbbm{k}$. We also note that secondary K\"ahler differentials are in fact K\"ahler differentials of $B\otimes A$ as a commutative algebra (by way of \eqref{og1} and \eqref{og2}), with the addition of \eqref{newcon}.
\end{remark}

\begin{remark}
By taking $B=\mathbbm{k}$, notice that secondary K\"ahler differentials become the usual K\"ahler differentials $\Omega_{A|\mathbbm{k}}^1$. More specifically, \eqref{newcon} gets suppressed in Definition \ref{SKdiff} with the identification of $B=\mathbbm{k}$.
\end{remark}

\begin{proposition}\label{HH1AB}
For a commutative triple $\mathcal{T}=(A,B,\varepsilon)$, we have that
$$HH_1(A,B,\varepsilon)\cong\Omega_{\mathcal{T}|\mathbbm{k}}^1.$$
\end{proposition}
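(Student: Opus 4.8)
The plan is to exploit commutativity of $A$ to trivialize the bottom of the complex. Since $\partial_1\big(\otimes\begin{pmatrix} a & \alpha\\ 1 & b\end{pmatrix}\big)=a\varepsilon(\alpha)b-b\varepsilon(\alpha)a=0$ when $A$ is commutative, we have $\ker(\partial_1)=A^{\otimes2}\otimes B$ and therefore $HH_1(A,B,\varepsilon)=(A^{\otimes2}\otimes B)/\Ig(\partial_2)$. I would then realize the isomorphism by constructing mutually inverse maps between this quotient and $\Omega_{\mathcal{T}|\mathbbm{k}}^1$.

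First I would define a $\mathbbm{k}$-linear map $\phi\colon A^{\otimes2}\otimes B\longrightarrow\Omega_{\mathcal{T}|\mathbbm{k}}^1$ on generators by $\phi\big(\otimes\begin{pmatrix} a & \alpha\\ 1 & b\end{pmatrix}\big)=a\,d(\alpha\otimes b)$, and show it annihilates $\Ig(\partial_2)$. Applying $\phi$ to $\partial_2\big(\otimes\begin{pmatrix} a & \alpha & \beta\\ 1 & b & \gamma\\ 1 & 1 & c\end{pmatrix}\big)$ gives three terms; expanding each using the consequences of \eqref{og1} and \eqref{og2} recorded after Definition \ref{SKdiff} (in particular $d(\alpha\beta\otimes ab)=a\varepsilon(\alpha)d(\beta\otimes b)+b\varepsilon(\beta)d(\alpha\otimes a)$, together with the splittings $\beta\gamma\otimes c=(\beta\otimes c)(\gamma\otimes1)$ and $\alpha\beta\otimes b\varepsilon(\gamma)c=(\alpha\otimes b\varepsilon(\gamma))(\beta\otimes c)$), the contributions of the form $d(\beta\otimes c)$ and $d(\alpha\otimes b)$ cancel in pairs, leaving exactly $abc\,\varepsilon(\alpha)\varepsilon(\beta)\big[2d(\gamma\otimes1)-d(1\otimes\varepsilon(\gamma))\big]$. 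This residue vanishes precisely by \eqref{newcon}, so $\phi$ descends to $\overline{\phi}\colon HH_1(A,B,\varepsilon)\longrightarrow\Omega_{\mathcal{T}|\mathbbm{k}}^1$. This computation is the crux of the proof, and the extra relation \eqref{newcon} is exactly what makes it close; its presence in Definition \ref{SKdiff} is engineered for this cancellation.

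For the inverse I would first observe that $HH_1(A,B,\varepsilon)$ carries an $A$-module structure in which $s\in A$ acts by multiplying the top-left entry $a_0$, and hence a $B\otimes A$-module structure via $(\beta\otimes c)\mapsto c\varepsilon(\beta)$; because $A$ is commutative, $s$ factors out of all three terms of $\partial_2$, so this action preserves $\Ig(\partial_2)$ and is well-defined on homology. I would then define $\psi\colon\Omega_{\mathcal{T}|\mathbbm{k}}^1\longrightarrow HH_1(A,B,\varepsilon)$ as the $B\otimes A$-linear map with $\psi(d(\alpha\otimes a))=[\otimes\begin{pmatrix} 1 & \alpha\\ 1 & a\end{pmatrix}]$. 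Well-definedness amounts to checking that $\psi$ respects \eqref{og1}--\eqref{newcon}: relation \eqref{og1} is immediate from multilinearity of the tensor product; relation \eqref{og2} follows from $\partial_2\big(\otimes\begin{pmatrix} 1 & \alpha & \beta\\ 1 & a & 1\\ 1 & 1 & b\end{pmatrix}\big)=\otimes\begin{pmatrix} a\varepsilon(\alpha) & \beta\\ 1 & b\end{pmatrix}-\otimes\begin{pmatrix} 1 & \alpha\beta\\ 1 & ab\end{pmatrix}+\otimes\begin{pmatrix} b\varepsilon(\beta) & \alpha\\ 1 & a\end{pmatrix}$; and relation \eqref{newcon} follows from $\partial_2\big(\otimes\begin{pmatrix} 1 & 1 & 1\\ 1 & 1 & \alpha\\ 1 & 1 & 1\end{pmatrix}\big)=2\,\otimes\begin{pmatrix} 1 & \alpha\\ 1 & 1\end{pmatrix}-\otimes\begin{pmatrix} 1 & 1\\ 1 & \varepsilon(\alpha)\end{pmatrix}$, both of which lie in $\Ig(\partial_2)$ and so vanish in homology.

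Finally I would verify that $\overline{\phi}$ and $\psi$ are mutually inverse, which is immediate on generators: using $B\otimes A$-linearity of $\psi$ one gets $\psi\overline{\phi}\big([\otimes\begin{pmatrix} a & \alpha\\ 1 & b\end{pmatrix}]\big)=\psi(a\,d(\alpha\otimes b))=a\cdot[\otimes\begin{pmatrix} 1 & \alpha\\ 1 & b\end{pmatrix}]=[\otimes\begin{pmatrix} a & \alpha\\ 1 & b\end{pmatrix}]$, and $\overline{\phi}\psi(d(\alpha\otimes a))=d(\alpha\otimes a)$. I expect the main obstacle to be the identity $\phi\circ\partial_2=0$: it is the one step where the bookkeeping of the $B$-decorations is delicate and where relation \eqref{newcon} is genuinely used, so arranging the leftover to land exactly on $2d(\gamma\otimes1)-d(1\otimes\varepsilon(\gamma))$ is the heart of the argument.
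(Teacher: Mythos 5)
Your proposal is correct and follows essentially the same route as the paper: trivialize $\partial_1$ by commutativity, send the class of $\otimes\begin{pmatrix} a & \alpha\\ 1 & b\end{pmatrix}$ to $a\,d(\alpha\otimes b)$ and back, and check compatibility with $\Ig(\partial_2)$ on one side and with relations \eqref{og1}--\eqref{newcon} on the other. Your identification of the leftover term $abc\,\varepsilon(\alpha)\varepsilon(\beta)\bigl[2d(\gamma\otimes1)-d(1\otimes\varepsilon(\gamma))\bigr]$ as the exact residue killed by \eqref{newcon} matches the paper's computation, and your explicit $\partial_2$ witnesses for \eqref{og2} and \eqref{newcon} are precisely the specializations of the paper's relation \eqref{rel}.
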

\begin{proof}
Since $A$ is commutative, the map $\partial_1:A^{\otimes2}\otimes B\longrightarrow A$ is trivial. Therefore we have that $HH_1(A,B,\varepsilon)$ is the quotient of $A^{\otimes2}\otimes B$ by the relation
\begin{equation}\label{rel}
\otimes\begin{pmatrix} ab\varepsilon(\alpha) & \beta\gamma\\ 1 & c\\ \end{pmatrix}-\otimes\begin{pmatrix} a & \alpha\beta\\ 1 & bc\varepsilon(\gamma)\\ \end{pmatrix}+\otimes\begin{pmatrix} ca\varepsilon(\beta) & \alpha\gamma\\ 1 & b\\ \end{pmatrix}=0.
\end{equation}
The map $HH_1(A,B,\varepsilon)\longrightarrow\Omega_{\mathcal{T}|\mathbbm{k}}^1$ sends the class of $\otimes\begin{pmatrix} a & \alpha\\ 1 & b\\ \end{pmatrix}$ to $ad(\alpha\otimes b)$. This map is well-defined because \eqref{rel} maps to
\begin{align*}
ab\varepsilon(\alpha)&d(\beta\gamma\otimes c)-ad(\alpha\beta\otimes bc\varepsilon(\gamma))+ca\varepsilon(\beta)d(\alpha\gamma\otimes b)\\
&=ab\varepsilon(\alpha)\varepsilon(\beta\gamma)d(1\otimes c)+ab\varepsilon(\alpha)cd(\beta\gamma\otimes1)-a\varepsilon(\alpha\beta)d(1\otimes bc\varepsilon(\gamma))\\
&~-abc\varepsilon(\gamma)d(\alpha\beta\otimes1)+ca\varepsilon(\beta)\varepsilon(\alpha\gamma)d(1\otimes b)+ca\varepsilon(\beta)bd(\alpha\gamma\otimes1)\\
&=ab\varepsilon(\alpha)\varepsilon(\beta\gamma)d(1\otimes c)+ab\varepsilon(\alpha)c\varepsilon(\beta)d(\gamma\otimes1)+ab\varepsilon(\alpha)c\varepsilon(\gamma)d(\beta\otimes1)\\
&~-a\varepsilon(\alpha\beta)bcd(1\otimes\varepsilon(\gamma))-a\varepsilon(\alpha\beta)b\varepsilon(\gamma)d(1\otimes c)-a\varepsilon(\alpha\beta)c\varepsilon(\gamma)d(1\otimes b)\\
&~-abc\varepsilon(\gamma)\varepsilon(\alpha)d(\beta\otimes1)-abc\varepsilon(\gamma)\varepsilon(\beta)d(\alpha\otimes1)+ca\varepsilon(\beta)\varepsilon(\alpha\gamma)d(1\otimes b)\\
&~+ca\varepsilon(\beta)b\varepsilon(\alpha)d(\gamma\otimes1)+ca\varepsilon(\beta)b\varepsilon(\gamma)d(\alpha\otimes1)\\
&=ab\varepsilon(\alpha)c\varepsilon(\beta)d(\gamma\otimes1)-a\varepsilon(\alpha\beta)bcd(1\otimes\varepsilon(\gamma))+ca\varepsilon(\beta)b\varepsilon(\alpha)d(\gamma\otimes1)\\
&=0.
\end{align*}

In the other direction, the map $\Omega_{\mathcal{T}|\mathbbm{k}}^1\longrightarrow HH_1(A,B,\varepsilon)$ sends $ad(\alpha\otimes b)$ to the class of $\otimes\begin{pmatrix} a & \alpha\\ 1 & b\\ \end{pmatrix}$, which is indeed a cycle because $ab\varepsilon(\alpha)-ba\varepsilon(\alpha)=0$, relying on $A$ being commutative. This is in fact a module homomorphism and we will now verify it respects the relations from Definition \ref{SKdiff}.

Observe that \eqref{og1} is immediate.

For \eqref{og2}, notice that $d(\alpha\beta\otimes bc)-b\varepsilon(\alpha)d(\beta\otimes c)-c\varepsilon(\beta)d(\alpha\otimes b)$ maps to
$$\otimes\begin{pmatrix} 1 & \alpha\beta\\ 1 & bc\\ \end{pmatrix}-\otimes\begin{pmatrix} b\varepsilon(\alpha) & \beta\\ 1 & c\\ \end{pmatrix}-\otimes\begin{pmatrix} c\varepsilon(\beta) & \alpha\\ 1 & b\\ \end{pmatrix},$$
which is $0$ by taking \eqref{rel} with $\gamma=1_B$ and $a=1_A$.

Finally for \eqref{newcon}, we note that $d(\gamma\otimes1)+d(\gamma\otimes1)-d(1\otimes\varepsilon(\gamma))$ maps to
$$\otimes\begin{pmatrix} 1 & \gamma\\ 1 & 1\\ \end{pmatrix}+\otimes\begin{pmatrix} 1 & \gamma\\ 1 & 1\\ \end{pmatrix}-\otimes\begin{pmatrix} 1 & 1\\ 1 & \varepsilon(\gamma)\\ \end{pmatrix},$$
which is $0$ from \eqref{rel} when we take $\alpha=\beta=1_B$ and $a=b=c=1_A$.

To complete the proof, we simply note that the two maps defined above are inverses of each other. That is,
$$\otimes\begin{pmatrix} a & \alpha\\ 1 & b\\ \end{pmatrix}\longmapsto ad(\alpha\otimes b)\longmapsto\otimes\begin{pmatrix} a & \alpha\\ 1 & b\\ \end{pmatrix},$$
and
$$ad(\alpha\otimes b)\longmapsto\otimes\begin{pmatrix} a & \alpha\\ 1 & b\\ \end{pmatrix}\longmapsto ad(\alpha\otimes b).$$
The isomorphism follows.
\end{proof}

\begin{corollary}\label{HC1AB}
For a commutative triple $\mathcal{T}=(A,B,\varepsilon)$, we have that
$$HC_1(A,B,\varepsilon)\cong\frac{\Omega_{\mathcal{T}|\mathbbm{k}}^1}{d(1\otimes A)}.$$
\end{corollary}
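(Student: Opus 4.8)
The plan is to exhibit $HC_1(A,B,\varepsilon)$ as a quotient of $HH_1(A,B,\varepsilon)$ and then carry the computation across the isomorphism of Proposition \ref{HH1AB}. Since $A$ is commutative, $\partial_1$ is trivial, so both homologies in degree one are cokernels of $\partial_2$: one has $HH_1(A,B,\varepsilon)=\overline{C}_1/\Ig(\partial_2)$, while $HC_1(A,B,\varepsilon)=\overline{C}_1/(\Ig(1-\lambda)+\Ig(\partial_2))$, the latter because $\overline{C}_1^\lambda=\overline{C}_1/\Ig(1-\lambda)$ and $\partial_2$ descends to the cyclic quotient. By the third isomorphism theorem the canonical surjection $I_*\colon HH_1(A,B,\varepsilon)\twoheadrightarrow HC_1(A,B,\varepsilon)$ has kernel the image of $\Ig(1-\lambda)$ in $HH_1(A,B,\varepsilon)$. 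It therefore suffices to identify this image under the isomorphism $HH_1(A,B,\varepsilon)\cong\Omega_{\mathcal{T}|\mathbbm{k}}^1$.

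First I would record the degree-one cyclic action: $\lambda$ sends $\otimes\begin{pmatrix} a & \alpha\\ 1 & b\\ \end{pmatrix}$ to $-\otimes\begin{pmatrix} b & \alpha\\ 1 & a\\ \end{pmatrix}$, so a typical generator of $\Ig(1-\lambda)$ is $\otimes\begin{pmatrix} a & \alpha\\ 1 & b\\ \end{pmatrix}+\otimes\begin{pmatrix} b & \alpha\\ 1 & a\\ \end{pmatrix}$, which maps under Proposition \ref{HH1AB} to $a\,d(\alpha\otimes b)+b\,d(\alpha\otimes a)$. The computational heart of the argument is to simplify this inside $\Omega_{\mathcal{T}|\mathbbm{k}}^1$. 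Applying the consequence $d(\alpha\otimes a)=\varepsilon(\alpha)d(1\otimes a)+a\,d(\alpha\otimes1)$ to both terms and then the Leibniz rule $d(1\otimes ab)=a\,d(1\otimes b)+b\,d(1\otimes a)$, the sum collapses to $\varepsilon(\alpha)d(1\otimes ab)+2ab\,d(\alpha\otimes1)$. Relation \eqref{newcon} is exactly what is needed at this point: it rewrites $2\,d(\alpha\otimes1)$ as $d(1\otimes\varepsilon(\alpha))$, and a final use of the Leibniz rule repackages the expression as $d(1\otimes\varepsilon(\alpha)ab)$, an element of $d(1\otimes A)$.

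Conversely, specializing to $\alpha=1_B$ and $b=1_A$ produces $d(1\otimes a)$ for arbitrary $a\in A$, so the image of $\Ig(1-\lambda)$ is exactly the subspace $d(1\otimes A)$; this is genuinely a $\mathbbm{k}$-subspace since $d$ is $\mathbbm{k}$-linear and additive in the slot $1\otimes-$ by \eqref{og1}. Combining this identification with the surjection $I_*$ then gives $HC_1(A,B,\varepsilon)\cong HH_1(A,B,\varepsilon)/d(1\otimes A)\cong\Omega_{\mathcal{T}|\mathbbm{k}}^1/d(1\otimes A)$, as claimed.

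The main obstacle is the middle simplification: it is not \emph{a priori} clear that $a\,d(\alpha\otimes b)+b\,d(\alpha\otimes a)$ should land in $d(1\otimes A)$, and the Leibniz-type relations alone do not force it; it is precisely relation \eqref{newcon} --- the feature distinguishing secondary K\"ahler differentials from ordinary ones --- that makes the stray $d(\alpha\otimes1)$ terms reassemble correctly. A secondary point to verify carefully is that the projection used here agrees with the map $I_*$ of the Connes-type sequence in Theorem \ref{SCLES}, and that passing to the cyclic quotient introduces no relations on $\Ig(\partial_2)$ beyond those already present in the computation of $HH_1(A,B,\varepsilon)$.
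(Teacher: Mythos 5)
Your proposal is correct, but it takes a genuinely different route from the paper. The paper invokes the secondary Connes long exact sequence (Theorem \ref{SCLES}): after computing $HC_0(A,B,\varepsilon)=A$, the tail of the sequence reads $A\xrightarrow{B_*}\Omega_{\mathcal{T}|\mathbbm{k}}^1\xrightarrow{I_*}HC_1(A,B,\varepsilon)\to0$, and exactness gives $HC_1\cong\Omega_{\mathcal{T}|\mathbbm{k}}^1/\Ig(B_*)$, with $B(a)=\otimes\begin{pmatrix}1&1\\1&a\end{pmatrix}+\otimes\begin{pmatrix}a&1\\1&1\end{pmatrix}$ mapping to $d(1\otimes a)+a\,d(1\otimes1)=d(1\otimes a)$. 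You instead bypass the long exact sequence entirely and work from the definition of the quotient complex: since $\partial_1=0$, you get $HC_1=\overline{C}_1/(\Ig(1-\lambda)+\Ig(\partial_2))$ and reduce to identifying the image of $\Ig(1-\lambda)$ inside $\Omega_{\mathcal{T}|\mathbbm{k}}^1$. Your computation there is sound: $a\,d(\alpha\otimes b)+b\,d(\alpha\otimes a)=\varepsilon(\alpha)d(1\otimes ab)+2ab\,d(\alpha\otimes1)=d(1\otimes\varepsilon(\alpha)ab)$ using relation \eqref{newcon}, and the specialization $\alpha=1_B$, $b=1_A$ gives the reverse containment. Note that the paper's generator $B(a)$ is exactly $(1-\lambda)\bigl(\otimes\begin{pmatrix}a&1\\1&1\end{pmatrix}\bigr)$, so the paper only ever needs the image of $1-\lambda$ on these special elements, exactness of the Connes sequence doing the rest; your argument shows by hand that the full image of $1-\lambda$ collapses to the same subspace $d(1\otimes A)$, which amounts to re-proving the relevant piece of exactness in degree one. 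What your route buys is self-containedness (no appeal to Theorem \ref{SCLES}) and a clear demonstration of why relation \eqref{newcon} is indispensable; what the paper's route buys is brevity. Your closing worry about matching your projection with the $I_*$ of Theorem \ref{SCLES} is moot --- your argument never uses that theorem, so no such compatibility needs to be checked.
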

\begin{proof}
First note that $HC_0(A,B,\varepsilon)= HH_0(A,B,\varepsilon)$ by Example \ref{dimzer}, and then by Example \ref{dimzerH} and since $A$ is commutative, we have that $HH_0(A,B,\varepsilon)=\frac{A}{[A,A]}=A$. Hence $HC_0(A,B,\varepsilon)=A$. Therefore, employing Theorem \ref{SCLES} in low dimension, we get that
$$\ldots\longrightarrow A\xrightarrow{~B_*~}\Omega_{\mathcal{T}|\mathbbm{k}}^1\xrightarrow{~I_*~}HC_1(A,B,\varepsilon)\longrightarrow0.$$
Here $B:A\longrightarrow A^{\otimes2}\otimes B$ is defined by
$$B(a)=\otimes\begin{pmatrix} 1 & 1\\ 1 & a\\ \end{pmatrix}+\otimes\begin{pmatrix} a & 1\\ 1 & 1\\ \end{pmatrix}.$$
Then by exactness,
$$HC_1(A,B,\varepsilon)=\Ig(I_*)\cong\frac{\Omega_{\mathcal{T}|\mathbbm{k}}^1}{\Ker(I_*)}=\frac{\Omega_{\mathcal{T}|\mathbbm{k}}^1}{\Ig(B_*)}.$$
Finally we observe that the image of the map $B$ has the form $d(1\otimes a)+ad(1\otimes1)=d(1\otimes a)$, and this is exactly $d(1\otimes A)$, which was what we wanted.
\end{proof}

\begin{remark}
Again, taking $B=\mathbbm{k}$, we see that both Proposition \ref{HH1AB} and Corollary \ref{HC1AB} reduce to their well-known results in the usual case. That is, $HH_1(A)\cong\Omega_{A|\mathbbm{k}}^1$ and $HC_1(A)\cong\Omega_{A|\mathbbm{k}}^1/d(A)$, respectively.
\end{remark}

\section{The Kernel of a Multiplication Map}\label{sec4}

In this section, we establish the latter isomorphism corresponding to Theorem \ref{oldmain} for the secondary case.

For a commutative triple $(A,B,\varepsilon)$, set $J$ to be the kernel of the multiplication map from $A^{\otimes2}\otimes B$ to $A$. Explicitly, consider $m:A^{\otimes2}\otimes B\longrightarrow A$ given by $m\Big(\otimes\begin{pmatrix} a & \alpha\\ 1 & b\\ \end{pmatrix}\Big)=ab\varepsilon(\alpha)$, and set $J=\Ker(m)$. It's straightforward to see that $J$ is generated by elements of the form
$$
\otimes\begin{pmatrix} 1 & \alpha\\ 1 & a\\ \end{pmatrix}-\otimes\begin{pmatrix} a\varepsilon(\alpha) & 1\\ 1 & 1\\ \end{pmatrix}.
$$
Notice that when $B=\mathbbm{k}$ we get that $J$ reduces to $I$, as described in Section \ref{oldkernel}.

In order to capture the desired isomorphism, we must also introduce an additional set. Define $\hat{J}$ to be the set consisting of all finite sums of elements of the form
$$
\otimes\begin{pmatrix} 1 & \alpha\\ 1 & 1\\ \end{pmatrix}-\otimes\begin{pmatrix} \varepsilon(\alpha) & 1\\ 1 & 1\\ \end{pmatrix}+\otimes\begin{pmatrix} 1 & \alpha\\ 1 & 1\\ \end{pmatrix}-\otimes\begin{pmatrix} 1 & 1\\ 1 & 1\varepsilon(\alpha)\\ \end{pmatrix},
$$
where $\alpha\in B$. One can check that $\hat{J}$ is both an $A$-bimodule, as well as a submodule of $J$.

\begin{remark}
When $B=\mathbbm{k}$, observe that $\hat{J}=\{0\}\subseteq J^2=I^2$.
\end{remark}

One can also see that the $A$-bimodule $J/(J^2+\hat{J})$ is $A$-symmetric. That is, the two module structures agree. We are now ready for our sought-after isomorphism:

\begin{proposition}\label{JIso}
For a commutative triple $\mathcal{T}=(A,B,\varepsilon)$, we have that $$\Omega_{\mathcal{T}|\mathbbm{k}}^1\cong J/(J^2+\hat{J}).$$
\end{proposition}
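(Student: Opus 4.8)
The plan is to mimic the classical argument behind $\Omega_{A|\mathbbm{k}}^1\cong I/I^2$, constructing two mutually inverse homomorphisms between $\Omega_{\mathcal{T}|\mathbbm{k}}^1$ and $J/(J^2+\hat{J})$ and verifying each is well-defined. Throughout I abbreviate the distinguished generator of $J$ as $g(\alpha,a)=\otimes\begin{pmatrix} 1 & \alpha\\ 1 & a\\ \end{pmatrix}-\otimes\begin{pmatrix} a\varepsilon(\alpha) & 1\\ 1 & 1\\ \end{pmatrix}$. I will use that $J/(J^2+\hat{J})$ is an $A$-module (indeed $A$-symmetric, so the left action is unambiguous), where $c\in A$ acts by left multiplication by $\otimes\begin{pmatrix} c & 1\\ 1 & 1\\ \end{pmatrix}$; note that the $B\otimes A$-module structure on $\Omega_{\mathcal{T}|\mathbbm{k}}^1$ also factors through $A$ via $(\alpha\otimes a)\mapsto a\varepsilon(\alpha)$, so both sides are genuinely $A$-modules and I will aim for an $A$-linear isomorphism.

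For the forward map $\phi\colon\Omega_{\mathcal{T}|\mathbbm{k}}^1\to J/(J^2+\hat{J})$ I send $d(\alpha\otimes a)\mapsto[g(\alpha,a)]$ and extend $B\otimes A$-linearly. Since $g$ is $\mathbbm{k}$-bilinear in $(\alpha,a)$ and $\varepsilon$ is $\mathbbm{k}$-linear, relation \eqref{og1} is automatic, so the real content is \eqref{og2} and \eqref{newcon}. For \eqref{og2} the key observation is that a product of two generators is exactly the Leibniz defect: multiplying in $A^{\otimes2}\otimes B$ gives
\[
g(\alpha,a)\,g(\beta,b)=\otimes\begin{pmatrix} 1 & \alpha\beta\\ 1 & ab\\ \end{pmatrix}-\otimes\begin{pmatrix} b\varepsilon(\beta) & \alpha\\ 1 & a\\ \end{pmatrix}-\otimes\begin{pmatrix} a\varepsilon(\alpha) & \beta\\ 1 & b\\ \end{pmatrix}+\otimes\begin{pmatrix} ab\varepsilon(\alpha\beta) & 1\\ 1 & 1\\ \end{pmatrix},
\]
and a short computation shows this equals $g(\alpha\beta,ab)-a\varepsilon(\alpha)\cdot g(\beta,b)-b\varepsilon(\beta)\cdot g(\alpha,a)$. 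Hence the image under $\phi$ of the relation \eqref{og2} element $d(\alpha\beta\otimes ab)-a\varepsilon(\alpha)d(\beta\otimes b)-b\varepsilon(\beta)d(\alpha\otimes a)$ is the class of $g(\alpha,a)g(\beta,b)$, which vanishes because $g(\alpha,a)g(\beta,b)\in J^2$. For \eqref{newcon} I compute $2g(\alpha,1)-g(1_B,\varepsilon(\alpha))$ and recognize it verbatim as a generator of $\hat{J}$ (this is precisely why $\hat{J}$ was introduced), so it dies in the quotient.

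For the reverse direction I first define $\tilde{\psi}\colon A^{\otimes2}\otimes B\to\Omega_{\mathcal{T}|\mathbbm{k}}^1$ by $\otimes\begin{pmatrix} a & \alpha\\ 1 & b\\ \end{pmatrix}\mapsto a\,d(\alpha\otimes b)$ — the same formula used in Proposition \ref{HH1AB} — which is well-defined by $\mathbbm{k}$-multilinearity and is $A$-linear in the first slot. Restricting to $J$, it suffices to show $\tilde{\psi}$ annihilates $J^2$ and $\hat{J}$, after which it descends to $\psi\colon J/(J^2+\hat{J})\to\Omega_{\mathcal{T}|\mathbbm{k}}^1$. Applying $\tilde{\psi}$ to the displayed product $g(\alpha,a)g(\beta,b)$ yields $d(\alpha\beta\otimes ab)-a\varepsilon(\alpha)d(\beta\otimes b)-b\varepsilon(\beta)d(\alpha\otimes a)$ (using $d(1\otimes1)=0$ from Remark \ref{zerorem}), which is zero by \eqref{og2}; since every element of $J^2$ is an $A$-combination of such generator products, $A$-linearity of $\tilde{\psi}$ handles all of $J^2$. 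Applying $\tilde{\psi}$ to the generator of $\hat{J}$ gives $2d(\alpha\otimes1)-d(1\otimes\varepsilon(\alpha))$, which is zero by \eqref{newcon}.

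Finally, $\psi\circ\phi$ and $\phi\circ\psi$ each act as the identity on generators, via $d(\alpha\otimes a)\mapsto[g(\alpha,a)]\mapsto d(\alpha\otimes a)$ and $[g(\alpha,a)]\mapsto d(\alpha\otimes a)\mapsto[g(\alpha,a)]$; as both maps are $A$-linear (equivalently $B\otimes A$-linear) this forces them to be mutually inverse, giving the isomorphism. I expect the main difficulty to be bookkeeping rather than depth: one must keep the two module structures aligned (leaning on the $A$-symmetry of $J/(J^2+\hat{J})$), and, crucially, verify that quotienting by $\hat{J}$ — not merely by $J^2$ as in the classical case — is exactly what encodes the extra relation \eqref{newcon}. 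Pinning down that single correspondence is the crux; the surrounding multiplications are routine.
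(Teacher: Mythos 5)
Your proposal is correct and follows essentially the same route as the paper: the same two maps (generators $d(\alpha\otimes a)\leftrightarrow g(\alpha,a)$ modulo $J^2+\hat{J}$), the same key identity that $g(\alpha,a)g(\beta,b)$ equals the Leibniz defect $g(\alpha\beta,ab)-a\varepsilon(\alpha)g(\beta,b)-b\varepsilon(\beta)g(\alpha,a)$, and the same observation that relation \eqref{newcon} corresponds precisely to the generators of $\hat{J}$. Your only deviation is defining the reverse map on all of $A^{\otimes2}\otimes B$ and then restricting to $J$, which is a slightly more careful way of establishing well-definedness than the paper's definition on generators of $J$ alone, but the substance is identical.
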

\begin{proof}
We first define the map $\Omega_{\mathcal{T}|\mathbbm{k}}^1\longrightarrow J/(J^2+\hat{J})$ by sending $ad(\alpha\otimes b)$ to the class of $\otimes\begin{pmatrix} a & \alpha\\ 1 & b\\ \end{pmatrix}-\otimes\begin{pmatrix} a\varepsilon(\alpha)b & 1\\ 1 & 1\\ \end{pmatrix}$. Notice that this element is indeed in $J$. Next we aim to show that the map is well-defined and respects the relations found in Definition \ref{SKdiff}.

Of course, \eqref{og1} is clear.

For \eqref{og2}, notice that $d((\alpha\beta\otimes ab))-a\varepsilon(\alpha)d(\beta\otimes b)-b\varepsilon(\beta)d(\alpha\otimes a)$ maps to
\begin{align*}
\otimes&\begin{pmatrix} 1 & \alpha\beta\\ 1 & ab\\ \end{pmatrix}-\otimes\begin{pmatrix} ab\varepsilon(\alpha\beta) & 1\\ 1 & 1\\ \end{pmatrix}-\otimes\begin{pmatrix} a\varepsilon(\alpha) & \beta\\ 1 & b\\ \end{pmatrix}+\otimes\begin{pmatrix} ab\varepsilon(\alpha\beta) & 1\\ 1 & 1\\ \end{pmatrix}\\
&~-\otimes\begin{pmatrix} b\varepsilon(\beta) & \alpha\\ 1 & a\\ \end{pmatrix}+\otimes\begin{pmatrix} ab\varepsilon(\alpha\beta) & 1\\ 1 & 1\\ \end{pmatrix}\\
&=\otimes\begin{pmatrix} 1 & \alpha\beta\\ 1 & ab\\ \end{pmatrix}-\otimes\begin{pmatrix} a\varepsilon(\alpha) & \beta\\ 1 & b\\ \end{pmatrix}-\otimes\begin{pmatrix} b\varepsilon(\beta) & \alpha\\ 1 & a\\ \end{pmatrix}+\otimes\begin{pmatrix} ab\varepsilon(\alpha\beta) & 1\\ 1 & 1\\ \end{pmatrix}\in J^2\subseteq J^2 + \hat{J}.
\end{align*}

Finally, $d(\alpha\otimes1)+d(\alpha\otimes1)-d(1\otimes\varepsilon(\alpha))$ maps to
\begin{align*}
\otimes&\begin{pmatrix} 1 & \alpha\\ 1 & 1\\ \end{pmatrix}-\otimes\begin{pmatrix} \varepsilon(\alpha) & 1\\ 1 & 1\\ \end{pmatrix}+\otimes\begin{pmatrix} 1 & \alpha\\ 1 & 1\\ \end{pmatrix}-\otimes\begin{pmatrix} \varepsilon(\alpha) & 1\\ 1 & 1\\ \end{pmatrix}-\otimes\begin{pmatrix} 1 & 1\\ 1 & \varepsilon(\alpha)\\ \end{pmatrix}+\otimes\begin{pmatrix} \varepsilon(\alpha) & 1\\ 1 & 1\\ \end{pmatrix}\\
&=\otimes\begin{pmatrix} 1 & \alpha\\ 1 & 1\\ \end{pmatrix}-\otimes\begin{pmatrix} \varepsilon(\alpha) & 1\\ 1 & 1\\ \end{pmatrix}+\otimes\begin{pmatrix} 1 & \alpha\\ 1 & 1\\ \end{pmatrix}-\otimes\begin{pmatrix} 1 & 1\\ 1 & \varepsilon(\alpha)\\ \end{pmatrix}\in\hat{J}\subseteq J^2+\hat{J},
\end{align*}
and hence \eqref{newcon} is satisfied.

Next we define the map $J/(J^2+\hat{J})\longrightarrow\Omega_{\mathcal{T}|\mathbbm{k}}^1$ by sending the class of $\otimes\begin{pmatrix} 1 & \alpha\\ 1 & a\\ \end{pmatrix}-\otimes\begin{pmatrix} a\varepsilon(\alpha) & 1\\ 1 & 1\\ \end{pmatrix}$ to $1d(\alpha\otimes a)-a\varepsilon(\alpha)d(1\otimes1)=d(\alpha\otimes a)$. Notice this is well-defined because elements in $J^2$, which are of the form
\begin{align*}
&\left(\otimes\begin{pmatrix} 1 & \alpha\\ 1 & a\\ \end{pmatrix}-\otimes\begin{pmatrix} a\varepsilon(\alpha) & 1\\ 1 & 1\\ \end{pmatrix}\right)\left(\otimes\begin{pmatrix} 1 & \beta\\ 1 & b\\ \end{pmatrix}-\otimes\begin{pmatrix} b\varepsilon(\beta) & 1\\ 1 & 1\\ \end{pmatrix}\right)\\
&=\otimes\begin{pmatrix} 1 & \alpha\beta\\ 1 & ab\\ \end{pmatrix}-\otimes\begin{pmatrix} b\varepsilon(\beta) & \alpha\\ 1 & a\\ \end{pmatrix}-\otimes\begin{pmatrix} a\varepsilon(\alpha) & \beta\\ 1 & b\\ \end{pmatrix}+\otimes\begin{pmatrix} a\varepsilon(\alpha)b\varepsilon(\beta) & 1\\ 1 & 1\\\end{pmatrix}
\end{align*}
map to
$$
d(\alpha\beta\otimes ab)-b\varepsilon(\beta)d(\alpha\otimes a)-a\varepsilon(\alpha)d(\beta\otimes b)+a\varepsilon(\alpha)b\varepsilon(\beta)d(1\otimes1)=0
$$
by \eqref{og2} from Definition \ref{SKdiff} and Remark \ref{zerorem}. Likewise, elements in $\hat{J}$, which are of the form
$$
\otimes\begin{pmatrix} 1 & \alpha\\ 1 & 1\\ \end{pmatrix}-\otimes\begin{pmatrix} \varepsilon(\alpha) & 1\\ 1 & 1\\ \end{pmatrix}+\otimes\begin{pmatrix} 1 & \alpha\\ 1 & 1\\ \end{pmatrix}-\otimes\begin{pmatrix} 1 & 1\\ 1 & 1\varepsilon(\alpha)\\ \end{pmatrix}
$$
map to
$$
d(\alpha\otimes 1)-\varepsilon(\alpha)d(1\otimes 1)+d(\alpha\otimes 1)-d(1\otimes \varepsilon(\alpha))=0
$$
by \eqref{newcon} from Definition \ref{SKdiff} and Remark \ref{zerorem}.

To conclude, we notice that these maps are inverses of each other. That is,
$$
\otimes\begin{pmatrix} 1 & \alpha\\ 1 & a\\ \end{pmatrix}-\otimes\begin{pmatrix} a\varepsilon(\alpha) & 1\\ 1 & 1\\ \end{pmatrix}\longmapsto d(\alpha\otimes a)\longmapsto\otimes\begin{pmatrix} 1 & \alpha\\ 1 & a\\ \end{pmatrix}-\otimes\begin{pmatrix} a\varepsilon(\alpha) & 1\\ 1 & 1\\ \end{pmatrix},
$$
and
$$
ad(\alpha\otimes b)\longmapsto\otimes\begin{pmatrix} a & \alpha\\ 1 & b\\ \end{pmatrix}-\otimes\begin{pmatrix} a\varepsilon(\alpha)b & 1\\ 1 & 1\\ \end{pmatrix}\longmapsto ad(\alpha\otimes b).
$$
The isomorphism follows.
\end{proof}

We can now state our main theorem in full:

\begin{theorem}\label{newmain}
For a commutative triple $\mathcal{T}=(A,B,\varepsilon)$, we have that
$$HH_1(A,B,\varepsilon)\cong\Omega_{\mathcal{T}|\mathbbm{k}}^1\cong J/(J^2+\hat{J}).$$
\end{theorem}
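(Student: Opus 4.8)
The plan is to observe that Theorem \ref{newmain} is simply the concatenation of the two isomorphisms that have already been established individually in the preceding sections, and so the proof reduces to chaining them together by transitivity of isomorphism. There is no genuinely new content to prove here: the former isomorphism $HH_1(A,B,\varepsilon)\cong\Omega_{\mathcal{T}|\mathbbm{k}}^1$ is exactly the statement of Proposition \ref{HH1AB}, and the latter isomorphism $\Omega_{\mathcal{T}|\mathbbm{k}}^1\cong J/(J^2+\hat{J})$ is exactly the statement of Proposition \ref{JIso}. Hence the theorem is immediate once both are in hand.

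Concretely, I would first invoke Proposition \ref{HH1AB} to record that the explicit maps sending the class of $\otimes\begin{pmatrix} a & \alpha\\ 1 & b\\ \end{pmatrix}$ to $ad(\alpha\otimes b)$ and back furnish an isomorphism $HH_1(A,B,\varepsilon)\xrightarrow{\ \sim\ }\Omega_{\mathcal{T}|\mathbbm{k}}^1$. I would then invoke Proposition \ref{JIso} to record the isomorphism $\Omega_{\mathcal{T}|\mathbbm{k}}^1\xrightarrow{\ \sim\ }J/(J^2+\hat{J})$ given by sending $ad(\alpha\otimes b)$ to the class of $\otimes\begin{pmatrix} a & \alpha\\ 1 & b\\ \end{pmatrix}-\otimes\begin{pmatrix} a\varepsilon(\alpha)b & 1\\ 1 & 1\\ \end{pmatrix}$. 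Composing these two isomorphisms yields the desired chain, and the proof is complete.

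If one wishes to exhibit the composite map directly rather than merely appealing to transitivity, it is worth noting the pleasant compatibility of the two constructions: under the composite $HH_1(A,B,\varepsilon)\to J/(J^2+\hat{J})$, the class of $\otimes\begin{pmatrix} a & \alpha\\ 1 & b\\ \end{pmatrix}$ is sent first to $ad(\alpha\otimes b)$ and then to the class of $\otimes\begin{pmatrix} a & \alpha\\ 1 & b\\ \end{pmatrix}-\otimes\begin{pmatrix} a\varepsilon(\alpha)b & 1\\ 1 & 1\\ \end{pmatrix}$, which differs from the original generator only by the term $\otimes\begin{pmatrix} a\varepsilon(\alpha)b & 1\\ 1 & 1\\ \end{pmatrix}$ needed to land in the kernel $J$ of the multiplication map. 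This makes the composite transparent and could be stated explicitly as a remark.

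Since both constituent isomorphisms are already proved, there is no real obstacle remaining; the only point requiring any care is purely bookkeeping, namely confirming that the two propositions use the same normalization and the same representative-level formulas (both are phrased in terms of the element $ad(\alpha\otimes b)$ and the class of $\otimes\begin{pmatrix} a & \alpha\\ 1 & b\\ \end{pmatrix}$), so that the composition is well-defined and no hidden sign or scaling discrepancy arises. Because the generators and module structures match across Proposition \ref{HH1AB} and Proposition \ref{JIso} by construction, this check is immediate, and the theorem follows at once.
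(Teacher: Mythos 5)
Your proposal is correct and matches the paper exactly: the paper's proof of Theorem \ref{newmain} is the single line ``Unite Proposition \ref{HH1AB} and Proposition \ref{JIso},'' which is precisely the transitivity argument you give. The additional remarks on the explicit composite map are accurate but not needed.
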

\begin{proof}
Unite Proposition \ref{HH1AB} and Proposition \ref{JIso}.
\end{proof}

\begin{remark}
Taking $B=\mathbbm{k}$, we get that Theorem \ref{newmain} becomes Theorem \ref{oldmain}.
\end{remark}

\section{Future Work}\label{sec5}

There are rich properties involving $\Omega_{A|k}^1$ and $I/I^2$, such as the second fundamental exact sequence for $\Omega$. It is reasonable to ask if $\Omega_{\mathcal{T}|\mathbbm{k}}^1$ and $J/(J^2+\hat{J})$ also fit into some exact sequence. Furthermore, one of the main applications of Hochschild homology is that it leads to de Rham cohomology. It is appropriate that the secondary Hochschild homology could lead to a secondary de Rham cohomology, which could have interesting geometrical consequences and results. Ultimately, one wonders if we could derive an analogue of the Hochschild-Kostant-Rosenberg Theorem.

Finally, with the study of secondary Hochschild homology and secondary K\"ahler differentials, a natural follow-up is to investigate the secondary Hochschild cohomology and generalized derivations. We plan to delve into this more in the future.



\begin{thebibliography}{99}


\bibitem{BBN} Mamta Balodi, Abhishek Banerjee, and Anita Naolekar. BV-operators and the secondary Hochschild complex. \emph{C. R. Math. Acad. Sci. Paris}, 358(11-12):1239--1258, 2020.

\bibitem{CHL} Samuel Carolus, Samuel A. Hokamp, and Jacob Laubacher. Deformation theories controlled by Hochschild cohomologies. \emph{S\~ao Paulo J. Math. Sci.}, 14(2):481--495, 2020.

\bibitem{CL} Samuel Carolus and Jacob Laubacher. Simplicial structures over the 3-sphere and generalized higher order Hochschild homology. \emph{Categ. Gen. Algebr. Struct. Appl.}, 15(1):93--143, 2021.

\bibitem{CLS} Samuel Carolus, Jacob Laubacher, and Mihai D. Staic. A simplicial construction for noncommutative settings. \emph{Homology Homotopy Appl.}, 23(1):49--60, 2021.

\bibitem{CS} Samuel Carolus and Mihai D. Staic. $G$-algebra structure on the higher order Hochschild cohomology $H_{S^2}^*(A,A)$. \emph{Algebra Colloq.}, 29(1):113--124, 2022.

\bibitem{C} Alain Connes. Noncommutative differential geometry. \emph{Inst. Hautes \'Etudes Sci. Publ. Math.}, (62):257--360, 1985.

\bibitem{CSS} Bruce R. Corrigan-Salter and Mihai D. Staic. Higher-order and secondary Hochschild cohomology. \emph{C. R. Math. Acad. Sci. Paris}, 354(11):1049--1054, 2016.

\bibitem{DMN} Apurba Das, Satyendra Kumar Mishra, and Anita Naolekar. Noncommutative differential calculus structure on secondary Hochschild (co)homology. \emph{Comm. Algebra}, 50(6):2349--2365, 2022.

\bibitem{G} Murray Gerstenhaber. On the deformation of rings and algebras. \emph{Ann. of Math. (2)}, 79:59--103, 1964.

\bibitem{H} Gerhard Hochschild. On the cohomology groups of an associative algebra. \emph{Ann. of Math. (2)}, 46:58--67, 1945.

\bibitem{Laub} Jacob Laubacher. \emph{Secondary Hochschild and cyclic (co)homologies}. OhioLINK Electronic Theses and Dissertations Center, Columbus, Ohio, 2017. Dissertation (Ph.D.)--Bowling Green State University.

\bibitem{L} Jacob Laubacher. Properties of the secondary Hochschild homology. \emph{Algebra Colloq.}, 25(2):225--242, 2018.

\bibitem{LSS} Jacob Laubacher, Mihai D. Staic, and Alin Stancu. Bar simplicial modules and secondary cyclic (co)homology. \emph{J. Noncommut. Geom.}, 12(3):865--887, 2018.

\bibitem{Lo} Jean-Louis Loday. \emph{Cyclic homology}, volume 301 of \emph{Grundlehren der Mathematischen Wissenschaften [Fundamental Principles of Mathematical Sciences]}. Springer-Verlag, Berlin, second edition, 1998.

\bibitem{S} Mihai D. Staic. Secondary Hochschild cohomology. \emph{Algebr. Represent. Theory}, 19(1):47--56, 2016.

\bibitem{SS} Mihai D. Staic and Alin Stancu. Operations on the secondary Hochschild cohomology. \emph{Homology Homotopy Appl.}, 17(1):129--146, 2015.

\bibitem{W} Charles A. Weibel. \emph{An introduction to homological algebra}, volume 38 of \emph{Cambridge Studies in Advanced Mathematics}. Cambridge University Press, Cambridge, 1994.

\end{thebibliography}
\end{document}